\newtheorem{lem}{Lemma}
\newcommand{\FL}{$\mathcal{F}^{1,1}_L$}
\newcommand{\Smu}{$\mathcal{S}^{1,1}_{\mu,L}$}
\begin{document}

\title{Global convergence of the Heavy-ball method for convex optimization
\thanks{ This work was sponsored in part by the Swedish
Foundation for Strategic Research (SSF) and the Swedish Research Council
(VR).}
}


\author{Euhanna Ghadimi \and {Hamid Reza Feyzmahdavian} \and Mikael Johansson}


\institute{E. Ghadimi \and  H. R. Feyzmahdavian \and M. Johansson \at
              Department of Automatic Control, School of Electrical Engineering and ACCESS Linnaeus
Center, Royal Institute of Technology - KTH, Stockholm, Sweden. \\
              \email{euhanna@kth.se}
     \and
         H. R. Feyzmahdavian \\
           \email{hamidrez@kth.se}
           \\
           M. Johansson \\
           \email{mikaelj@kth.se}
}

\date{Received: date / Accepted: date}

\maketitle

\begin{abstract}
This paper establishes global convergence and provides global bounds of the convergence rate of the Heavy-ball method for convex optimization problems. When the objective function has Lipschitz-continuous gradient, we show that the Ces{\'a}ro average of the iterates converges to the optimum at a rate of $\mathcal{O}(1/k)$ where $k$ is the number of iterations. When the objective function is also strongly convex, we prove that the Heavy-ball iterates converge linearly to the unique optimum. 
\keywords{Performance of first-order algorithms \and Rate of convergence \and Complexity \and Smooth convex optimization \and Heavy-ball method \and Gradient method}
 \subclass{ 90C26 \and 90C30 \and 49M37}
\end{abstract}
\section{Introduction}
\label{intro}
First-order convex optimization methods have a rich history dating back to 1950's~\cite{Hestenes:52,Frank:56,KjA:58}. 
Recently, these methods have attracted significant interest, both in terms of new theory~\cite{Devolder:13,GuN:14,DrT:14} and in terms of applications in numerous areas such as signal processing~\cite{BeT:09}, machine learning~\cite{Xia:14} and control~\cite{NeN:14}. One reason for this renewed interest is that first-order methods
have a small per-iteration cost and are attractive in large-scale and distributed settings. But the development has also been fuelled by the development of accelerated methods with optimal convergence rates~\cite{Nes:83} and re-discovery of methods that are not only order-optimal, but also have optimal convergence times for smooth convex problems~\cite{Pol:64}. In spite of all this progress, some very basic questions about the achievable convergence speed of first-order convex optimization methods are still open~\cite{DrT:14}.

The basic first-order method is the gradient descent algorithm. For unconstrained convex optimization problems with objective functions that have Lipschitz-continuous gradient, the method produces iterates that are guaranteed to converge to the optimum at the rate $\mathcal{O}(1/k)$ where $k$ is the number of iterations. When the objective function is also strongly convex, the iterates are guaranteed to converge at a linear rate~\cite{Pol:87}.

In the early 1980's, Nemirovski and Yudin~\cite{NeY:83} proved that no first-order method can converge at a rate faster than $\mathcal{O}(1/k^2)$ on convex optimization problems with Lipschitz-continuous gradient. This created a gap between the guaranteed convergence rate of the gradient method and what could potentially be achieved. This gap was closed by Nesterov, who presented an accelerated first-order method that converges as ${\mathcal O}(1/k^2)$~\cite{Nes:83}. Later, the method was generalized to also attain linear convergence rate for strongly convex objective functions, resulting in the first truly order-optimal first-order method for convex optimization~\cite{Nes:04}. The accelerated first-order methods combine gradient information at the current and the past iterate, as well as the iterates themselves~\cite{Nes:04}. For strongly convex problems, Nesterov's method can be tuned to yield a better convergence factor than the gradient iteration, but it is not known how small the convergence factor can be made.

When the objective function is twice continuously differentiable, strongly convex and has Lipschitz continuous gradient, the Heavy-ball method by Polyak~\cite{Pol:64} has linear convergence rate and better convergence factor than both the gradient and Nesterov's accelerated gradient method. The Heavy-ball method uses previous iterates when computing the next, but in contrast to Nesterov's method it only uses the gradient at the current iterate. Extensions of the Heavy-ball method to constrained and distributed optimization problems have confirmed its performance benefits over the standard gradient-based methods~\cite{GsJ:13,ObP:14a,Wang:14}.

On the other hand, when the objective function is not necessary convex but has Lipschitz continuous gradient, Zavriev et al.~\cite{ZaK:93} provided sufficient conditions for the Heavy-ball trajectories to converge to a stationary point. However, there are virtually no results on the rate of convergence of the Heavy-ball method for convex problems that are not necessarily twice-differentiable. Recently, Lessard et al~\cite{LRP:14} showed by an example that the Heavy-ball method does not necessarily converge on strongly convex (but not twice differentiable) objective functions even if one chooses step-size parameters according to Polyak's original stability criterion. In general, it is not clear whether the Heavy-ball method performs better than Nesterov's method, or even the basic gradient descent when the objective is not twice continuously differentiable.

The aim of this paper is to contribute to a more complete understanding of first-order methods for convex optimization. We provide a global convergence analysis for the Heavy-ball method on convex optimization problems with Lipschitz-continuous gradient, with and without the additional assumption of strong convexity. We show that if the parameters of the Heavy-ball method are chosen within certain ranges, the running average of the iterates converge to the optimal point at the rate $\mathcal{O}(1/k)$ when the objective function has Lipschitz continuous gradient. Moreover, for the same class of problems, we are able to show that the individual iterates themselves converge at rate $\mathcal{O}(1/k)$ if the Heavy-ball method uses (appropriately chosen) time-varying step-sizes. Finally, if the cost function is also strongly convex,  we show that the iterates converge at a linear rate.

 The rest of the paper is organized as follows. Section~\ref{sec:back} reviews first-order convex optimization algorithms. Global convergence proofs for the Heavy-ball method are presented in Section~\ref{sec:FL} for objective functions with Lipschitz continuous gradient and in Section~\ref{sec:Smu} for objective functions that are also strongly convex. Concluding remarks are given in Section~\ref{sec:conclusion}.

\subsection{Notation}

We let $\mathbb{R}$, $\mathbb{N}$, and $\mathbb{N}_0$ denote the set of real numbers, the set of natural numbers, and the set of natural numbers including zero, respectively. The Euclidean norm is denoted by $\| \cdot \|$. 

%
%

\section{Background}
\label{sec:back}

We consider unconstrained convex optimization problems on the form
\begin{equation}
  \label{eq:unconst.problem}
  \underset{x\in \mathbb{R}^n}{\mbox{minimize}} \,\; f(x)
\end{equation}
where $f:\mathbb{R}^n\rightarrow \mathbb{R}$ is a continuously differentiable convex function. We will provide convergence bounds for the Heavy-ball method for all functions in the following classes.

\begin{definition}
  \label{def:FL}
We say that $f:\mathbb{R}^n\rightarrow \mathbb{R}$ belongs to the class \FL,  if it is convex, continuously differentiable, and its gradient is Lipschitz continuous with constant $L$, \emph{i.e.},
\begin{equation*}
   0\leq f(y)-f(x)-\langle \nabla f(x),\,y-x \rangle\leq \dfrac{L}{2}\Vert x - y\Vert^2,
\end{equation*}
holds for all $x,y \in \mathbb{R}^n$. If $f$ is also strongly convex with modulus $\mu> 0$, \emph{i.e.},
\begin{equation*}
 \dfrac{\mu}{2}\Vert x-y \Vert^2\leq f(y)-f(x)-\langle \nabla f(x),\,y-x \rangle, \quad \forall x,y \in \mathbb{R}^n,
\end{equation*}
then, we say that $f$ belongs to \Smu.
\end{definition}

Our baseline first-order method is gradient descent:
\begin{equation}\label{eq:gradient_iters}
  x_{k+1} = x_k - \alpha \nabla f(x_k),
\end{equation}
where $\alpha$ is a positive step-size parameter. Let $x^\star$ be an optimal point of~\eqref{eq:unconst.problem} and $f^\star = f(x^\star)$. If $f\in$ \FL, then $f(x_k)-f^\star$ associated with the sequence $\{x_k\}$  in~\eqref{eq:gradient_iters} converges at rate $\mathcal{O}(1/k)$. On the other hand, if $f\in$ \Smu, then the sequence $\{x_k\}$ generated by the gradient descent method converges linearly, \textit{i.e.}, there exists $q\in[0,1)$ such that
\begin{align*}
\Vert x_k-x^\star\Vert \leq q^k \Vert x_0 -x^\star\Vert,\quad k\in\mathbb{N}_0.
\end{align*}
The scalar $q$ is called the \textit{convergence factor}. The optimal convergence factor for $f\in$ \Smu is $q=(L-\mu)/(L+\mu)$, attained for $\alpha=2/(L+\mu)$ ~\cite{Pol:87}.

The convergence of the gradient iterates can be accelerated by accounting for the history of iterates when computing the ones to come. Methods in which the next iterate depends not only on the current iterate but also on the preceding ones are called \emph{multi-step methods}. The simplest multi-step extension of gradient descent is the Heavy-ball method:
\begin{equation}
  x_{k+1} = x_k-\alpha \nabla f(x_k) + \beta \left( x_k-x_{k-1} \right), \label{eq:hb_iters_constant}
\end{equation}
for constant parameters $\alpha>0$ and $\beta>0$~\cite{Pol:87}. For the class of twice continuously differentiable strongly convex functions with Lipschitz continuous gradient, Polyak used a local analysis to derive optimal step-size parameters and to show that the optimal convergence factor of the Heavy-ball iterates is $(\sqrt{L}-\sqrt{\mu})/(\sqrt{L}+\sqrt{\mu})$. This convergence factor is always smaller than the one associated with the gradient iterates, and significantly so when the Hessian of the objective function is poorly conditioned. Note that this local analysis requires twice differentiability of the objective functions, and is, therefore, not valid for all $f\in$ \FL nor for all $f\in$ \Smu.

In contrast, Nesterov's fast gradient method~\cite{Nes:04} is a first-order method with better convergence guarantees than the basic gradient method for objectives in \FL and \Smu classes.  In it's simplest form, Nesterov's algorithm with constant step-sizes takes the form
\begin{equation}\label{eq:Nesterov_iters}
  \begin{aligned}
    y_{k+1} & = x_k - \dfrac{1}{L}\nabla f(x_k),\\
    x_{k+1} & = y_{k+1}+\beta(y_{k+1}-y_k).
  \end{aligned}
\end{equation}
When $f\in$ \Smu, the iterates produced by~\eqref{eq:Nesterov_iters} with $\beta=(\sqrt{L}-\sqrt{\mu})/(\sqrt{L}+\sqrt{\mu})$ converge linearly towards the optimal point with a convergence factor $1-\sqrt{\mu/L}$. This factor is smaller than that of the gradient, but larger than that of the Heavy-ball method for twice-differentiable cost functions.

%
%

\section{Global analysis of Heavy-ball algorithm for the class \FL}
\label{sec:FL}

In this section, we consider the Heavy-ball iterates~\eqref{eq:hb_iters_constant} for the objective functions $f\in$ \FL. Our
first result shows that the method is indeed guaranteed to converge globally and estimates
the convergence rate of the Ces{\'a}ro averages of the iterates.

%
%

\begin{theorem}
\label{Theorem 1}
Assume that $f\in$\FL and that
\begin{align}\label{eq:Hb:FL:stabaility_criteria}
\beta\in[0,1),\quad \alpha\in\biggl(0,\dfrac{2(1-\beta)}{L}\biggr).
\end{align}
Then, the sequence $\{x_k\}$ generated by Heavy-ball iteration~\eqref{eq:hb_iters_constant} satisfies
\begin{align}\label{eq:hb_FL_bound}
f(\overline{x}_T)-f^{\star} \leq & \left\{
\begin{array}[l]{ll}
\frac{\Vert x_{0}-x^\star\Vert^2}{2(T+1)}\biggl(\frac{L\beta}{1-\beta}+\frac{1-\beta}{\alpha}\biggr),\;\;&\textup{if}\;\;
\alpha\in\bigl(0,\dfrac{1-\beta}{L}\bigr],\\
\frac{\Vert x_{0}-x^\star\Vert^2}{2(T+1)(2(1-\beta)-\alpha L)}\biggl({L\beta}+\frac{(1-\beta)^2}{\alpha}\biggr),\;\;&\textup{if}\;\;
\alpha\in\bigl[\dfrac{1-\beta}{L},\dfrac{2(1-\beta)}{L}\bigr),
\end{array}
\right.
\end{align}
where $\overline{x}_T$ is the Ces{\'a}ro average of the iterates, i.e., 
\begin{align*}
\overline{x}_T = \frac{1}{T+1}\sum_{k=0}^T x_k.
\end{align*}
\end{theorem}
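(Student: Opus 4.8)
The plan is to analyze the iteration through an auxiliary sequence that obeys a plain gradient-type recursion, and then run a Lyapunov/telescoping argument culminating in Jensen's inequality. Since $\beta\in[0,1)$ the quantity $1-\beta$ is positive, so I can introduce the momentum-corrected iterate $z_k = x_k + \frac{\beta}{1-\beta}(x_k - x_{k-1})$ together with the convention $x_{-1}=x_0$, so that $z_0 = x_0$. A direct substitution of the Heavy-ball update~\eqref{eq:hb_iters_constant} shows that the awkward momentum term cancels and $z_k$ satisfies the clean recursion $z_{k+1} = z_k - \gamma\nabla f(x_k)$ with effective step-size $\gamma = \alpha/(1-\beta)$. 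I expect this reformulation to be the crux of the whole argument: everything else is bookkeeping built on top of it.

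Next I would expand $\Vert z_{k+1} - x^\star\Vert^2 = \Vert z_k - x^\star\Vert^2 - 2\gamma\langle \nabla f(x_k), z_k - x^\star\rangle + \gamma^2\Vert \nabla f(x_k)\Vert^2$ and split the inner product using $z_k - x^\star = (x_k - x^\star) + \frac{\beta}{1-\beta}(x_k - x_{k-1})$. For the first piece I would use the co-coercivity lower bound $\langle \nabla f(x_k), x_k - x^\star\rangle \geq (f(x_k)-f^\star) + \frac{1}{2L}\Vert\nabla f(x_k)\Vert^2$, and for the second piece plain convexity $\langle \nabla f(x_k), x_k - x_{k-1}\rangle \geq f(x_k) - f(x_{k-1})$. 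Both follow from the class definition of \FL{} (the first by applying the descent-lemma upper bound to the shifted convex function $y \mapsto f(y) - \langle\nabla f(x_k), y\rangle$ at its minimizer). Writing $\delta_k = f(x_k)-f^\star$, this yields $\Vert z_{k+1}-x^\star\Vert^2 \leq \Vert z_k - x^\star\Vert^2 - 2\gamma\delta_k - \frac{2\gamma\beta}{1-\beta}(\delta_k - \delta_{k-1}) + \gamma(\gamma - \tfrac1L)\Vert\nabla f(x_k)\Vert^2$.

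The two regimes in~\eqref{eq:hb_FL_bound} would then correspond exactly to the sign of the residual coefficient $\gamma(\gamma - 1/L)$, that is, to whether $\alpha$ lies below or above $(1-\beta)/L$. When $\alpha \leq (1-\beta)/L$ the coefficient is nonpositive and the gradient term is simply dropped; when $(1-\beta)/L \leq \alpha < 2(1-\beta)/L$ it is nonnegative and I would absorb it using $\Vert\nabla f(x_k)\Vert^2 \leq 2L\delta_k$ (again a consequence of \FL). Either way one arrives at $\Vert z_{k+1}-x^\star\Vert^2 + c\,\delta_k \leq \Vert z_k - x^\star\Vert^2 + c\,\delta_{k-1} - A\,\delta_k$, with $c = \frac{2\gamma\beta}{1-\beta}$ and a positive constant $A$ equal to $2\gamma$ in the first case and to $4\gamma - 2L\gamma^2$ in the second. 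Defining the Lyapunov function $V_k = \Vert z_k - x^\star\Vert^2 + c\,\delta_{k-1}$ turns this into $V_{k+1} \leq V_k - A\,\delta_k$, which telescopes over $k = 0,\dots,T$ to give $A\sum_{k=0}^T \delta_k \leq V_0$.

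Finally I would bound the initial value $V_0 = \Vert x_0 - x^\star\Vert^2 + c\,\delta_0$ using the smoothness estimate $\delta_0 \leq \frac{L}{2}\Vert x_0 - x^\star\Vert^2$ (the descent-lemma bound at $x^\star$, where $\nabla f(x^\star)=0$), producing a clean multiple of $\Vert x_0 - x^\star\Vert^2$. Dividing by $A$ and invoking Jensen's inequality, $f(\overline{x}_T)-f^\star \leq \frac{1}{T+1}\sum_{k=0}^T \delta_k$, then yields precisely the two branches of~\eqref{eq:hb_FL_bound} after substituting $\gamma = \alpha/(1-\beta)$ and simplifying. The main obstacle is really the first step, namely recognizing the correct auxiliary variable $z_k$ and the companion Lyapunov function $V_k$; once these are in place the constants fall out by routine algebra, and the only points demanding care are the boundary term at $k=0$ (handled by $x_{-1}=x_0$) and keeping the two step-size regimes aligned with the stated intervals.
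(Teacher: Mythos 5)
Your proposal is correct and follows essentially the same route as the paper: the auxiliary iterate $z_k = x_k + \tfrac{\beta}{1-\beta}(x_k-x_{k-1})$ is exactly the paper's $x_k+p_k$, and the expansion, telescoping Lyapunov argument, Jensen step, and initial bound $f(x_0)-f^\star\le \tfrac{L}{2}\Vert x_0-x^\star\Vert^2$ all coincide. The only (cosmetic) difference is in the second step-size regime, where the paper tunes a convex combination of two inner-product lower bounds via a parameter $\lambda$ so that the gradient-norm term vanishes, while you keep the single stronger bound and absorb the leftover residual via $\Vert\nabla f(x_k)\Vert^2\le 2L(f(x_k)-f^\star)$; both yield identical constants.
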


%
%

\begin{proof}
Assume that $\beta\in[0,1)$, and  let
\begin{align}
p_k=\dfrac{\beta}{1-\beta}(x_k-x_{k-1}),\quad k\in\mathbb{N}_0.
\label{def:p}
\end{align}
Then
\begin{align*}
x_{k+1}+p_{k+1}&=\dfrac{1}{1-\beta}x_{k+1}-\dfrac{\beta}{1-\beta}x_k\overset{\eqref{eq:hb_iters_constant}}{=}x_k+p_k-\dfrac{\alpha}{1-\beta} \nabla f(x_k),
\end{align*}
which implies that
\begin{align}
\Vert x_{k+1}+p_{k+1}-x^\star\Vert^2=&  \Vert x_k+p_k-x^\star\Vert^2 -\dfrac{2\alpha}{1-\beta}\langle x_k+p_k-x^\star,\nabla f(x_k)\rangle +\biggl(\dfrac{\alpha}{1-\beta}\biggr)^2 \Vert \nabla f(x_k)\Vert^2\nonumber\\
\overset{\eqref{def:p}}{=}& \Vert x_k+p_k-x^\star\Vert^2 -\dfrac{2\alpha}{1-\beta}\langle x_k-x^\star,\nabla f(x_k)\rangle \nonumber\\
&-\dfrac{2\alpha\beta}{(1-\beta)^2}\langle x_k-x_{k-1},\nabla f(x_k)\rangle +\left(\dfrac{\alpha}{1-\beta}\right)^2 \Vert \nabla f(x_k)\Vert^2.
\label{Proof 1:0}
\end{align}
Since $f\in$ \FL, it follows from~\cite[Theorem 2.1.5]{Nes:04} that
\begin{equation}
\begin{aligned}
 \dfrac{1}{L}\Vert\nabla f(x_k)\Vert^2&\leq \langle x_k - x^\star , \nabla f(x_k) \rangle,\\
 f(x_k)-f^\star+\dfrac{1}{2L}\Vert\nabla f(x_k)\Vert^2&\leq \langle x_k - x^\star , \nabla f(x_k) \rangle,\\
f(x_k)-f(x_{k-1}) &\leq \langle x_k - x_{k-1} , \nabla f(x_k)) \rangle.
\end{aligned}
\label{eq:proof_nes.ineq}
\end{equation}
Substituting the above inequalities into~\eqref{Proof 1:0} yields
\begin{align*}
  \Vert x_{k+1}+p_{k+1}-x^\star\Vert^2\leq&\Vert x_k+p_k-x^\star\Vert^2 -\dfrac{2\alpha(1-\lambda)}{L(1-\beta)}\Vert \nabla f(x_k)\Vert^2 -\dfrac{2\alpha\lambda}{1-\beta}\bigl(f(x_k)-f^\star\bigr)\nonumber\\
  &-\dfrac{\alpha\lambda}{L(1-\beta)} \Vert \nabla f(x_k)\Vert^2 -\dfrac{2\alpha\beta}{(1-\beta)^2}\bigl(f(x_k)-f(x_{k-1})\bigr) \nonumber\\
&+\left(\dfrac{\alpha}{1-\beta}\right)^2 \Vert \nabla f(x_k)\Vert^2,
\end{align*}
where $\lambda\in (0,1]$ is a parameter which we will use to balance the weights between the first two inequities in~\eqref{eq:proof_nes.ineq}.
Collecting the terms in the preceding inequality, we obtain
\begin{align}
\dfrac{2\alpha}{(1-\beta)}\bigl(\lambda+\dfrac{\beta}{1-\beta}\bigr)&\bigl(f(x_k)-f^{\star}\bigr)+\Vert x_{k+1}+p_{k+1}-x^\star\Vert^2\nonumber\\
&\leq \dfrac{2\alpha\beta}{(1-\beta)^2}\bigl(f(x_{k-1})-f^{\star}\bigr)+ \Vert x_k+p_k-x^\star\Vert^2 \nonumber\\
&\hspace{2mm}+\left(\dfrac{\alpha}{1-\beta}\right)\left(\dfrac{\alpha}{1-\beta}-\dfrac{2-\lambda}{L}\right) \Vert \nabla f(x_k)\Vert^2.
\label{Proof 1:1}
\end{align}
Note that when $\alpha\in[0,(2-\lambda)(1-\beta)/L]$, the last term of~\eqref{Proof 1:1} becomes non-positive and, therefore, can be eliminated from the right-hand-side. Summing~\eqref{Proof 1:1} over $k = 0,\ldots,T$ gives
\begin{align*}
\dfrac{2\alpha\lambda}{(1-\beta)}\sum_{k=0}^T \bigl(f(x_k)-f^{\star}\bigr)&+\sum_{k=0}^T\left(\dfrac{2\alpha\beta}{(1-\beta)^2}\bigl(f(x_k)-f^{\star}\bigr)+\Vert x_{k+1}+p_{k+1}-x^\star\Vert^2\right)\\
&\hspace{1cm}\leq \sum_{k=0}^T\left(\dfrac{2\alpha\beta}{(1-\beta)^2}\bigl(f(x_{k-1})-f^{\star}\bigr)+\Vert x_{k}+p_{k}-x^\star\Vert^2\right),\
\end{align*}
which implies that
\begin{align*}
\dfrac{2\alpha\lambda}{(1-\beta)}\sum_{k=0}^T \bigl(f(x_k)-f^{\star}\bigr)\leq& \dfrac{2\alpha\beta}{(1-\beta)^2}\bigl(f(x_0)-f^{\star}\bigr)+\Vert x_{0}-x^\star\Vert^2.
\end{align*}
Note that as $f$ is convex, we have
\begin{align}\label{eq:FL:mean:func:ineq}
(T+1)f(\overline{x}_T)\leq \sum_{k=0}^T f(x_k).
\end{align}
It now follows that
\begin{align}\label{eq:hb_cvx_proof_final_ineq}
&f(\overline{x}_T)-f^{\star}\leq \dfrac{1}{T+1}\biggl(\dfrac{\beta}{\lambda(1-\beta)}(f(x_{0})-f^\star)+\dfrac{1-\beta}{2\alpha\lambda}\Vert x_{0}-x^\star\Vert^2\biggr).
\end{align}
%
Additionally, according to~\cite[Lemma 1.2.3]{Nes:04}, $f(x_0)-f^\star \leq (L/2)\Vert x_0-x^\star\Vert^2$. The proof is completed by replacing this upper bound in~\eqref{eq:hb_cvx_proof_final_ineq} and setting $\lambda = 1$ for $\alpha\in\bigl(0,(1-\beta)/L\bigr]$ and $\lambda= 2- (\alpha L)/(1-\beta)$ for $\alpha\in\bigl[(1-\beta)/L,2(1-\beta)/L\bigr)$.
\end{proof}
A few remarks regarding the results of Theorem~\ref{Theorem 1} are in order:
first, a similar convergence rate can be proved for the minimum function values within $T$ number of Heavy-ball iterates. More precisely, the sequence $\{x_k\}$ generated by~\eqref{eq:hb_iters_constant} satisfies
\begin{align*}
\min_{0\leq k\leq T} f(x_k)-f^{\star} \leq \mathcal{O}\biggl( \dfrac{\Vert x_0 -x^\star\Vert^2}{T}\biggr),
\end{align*}
for all $T\in\mathbb{N}_0$. Second, for any fixed $\bar{\alpha}\in (0,1/L]$, one can verify that the $\beta\in[0,1)$ which minimizes the convergence factor~\eqref{eq:hb_FL_bound} is $\beta^\star = 1-\sqrt{\bar{\alpha} L}$ which yields the convergence factor
\begin{equation}\nonumber
\begin{aligned}
\min_{0\leq k\leq T} f(x_k)-f^{\star} \leq \dfrac{1}{2(T+1)}\biggl(\dfrac{2\sqrt{\bar{\alpha}L}-\bar{\alpha}L}{\bar{\alpha}}\biggr)\Vert x_0-x^\star\Vert^2.
\end{aligned}
\end{equation}
Note that this convergence factor is always smaller than the one for the gradient descent method obtained by setting $\beta=0$ in~\eqref{eq:hb_FL_bound}, i.e.,
\begin{equation}\nonumber
\begin{aligned}
  f(x_T)-f^{\star} \leq \dfrac{1}{2\bar{\alpha}(T+1)}\Vert x_0-x^\star\Vert^2.
\end{aligned}
\end{equation}
Finally, setting $\bar{\alpha}=1/L$ in the preceding upper bounds, we see that the factors coincide and
equal the best convergence factor of the gradient descent method reported in~\cite{BeT:09}.

Next, we show that our analysis can be strengthened when we use (appropriately chosen)
time-varying step-sizes in the Heavy-ball method. In this case, the individual iterates $x_k$ (and not just their running average) converge with rate $\mathcal{O}(1/k)$.

%
%

\begin{theorem}
Assume that $f\in$\FL and that
\begin{align}\label{eq:Hb:FL:sequence}
\beta_k=\frac{k}{k+2},\quad \alpha_k= \frac{\alpha_0}{k+2},\quad k\in\mathbb{N},
\end{align}
where $\alpha_0\in(0,1/L]$. Then, the sequence $\{x_k\}$ generated by Heavy-ball iteration~\eqref{eq:hb_iters_constant} satisfies
\begin{align}\label{eq:hb_FL_bound-2}
f(x_T)-f^{\star} \leq & \frac{\Vert x_{0}-x^\star\Vert^2}{2\alpha_0(T+1)},\quad T\in\mathbb{N}.
\end{align}
\end{theorem}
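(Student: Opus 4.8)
The plan is to reuse the change of variables from the proof of Theorem~\ref{Theorem 1}, but to exploit the specific time-varying parameters so as to turn the Heavy-ball recursion into an \emph{exact} gradient step with a constant effective step-size $\alpha_0$. Concretely, I would introduce the momentum variable $p_k = c_k(x_k-x_{k-1})$ with coefficient $c_k := k$, so that $p_0=0$. Writing $x_{k+1}+p_{k+1} = (k+2)x_{k+1}-(k+1)x_k$ and substituting the iteration~\eqref{eq:hb_iters_constant} with $\beta_k=k/(k+2)$ and $\alpha_k=\alpha_0/(k+2)$ (so that $(k+2)\beta_k=k$ and $(k+2)\alpha_k=\alpha_0$) yields the clean telescoping identity
\[
x_{k+1}+p_{k+1} = x_k+p_k-\alpha_0\,\nabla f(x_k),\qquad k\ge 0.
\]
The crucial point — and the main thing to get right — is that the correct coefficient is $c_k=k$, rather than the analogue $\beta_k/(1-\beta_k)=k/2$ used in Theorem~\ref{Theorem 1}: only $c_k=k$ satisfies the relation $(1+c_{k+1})\beta_k=c_k$ that collapses the $x_k$-coefficient to $1$, and it simultaneously forces the gradient pre-factor $(1+c_{k+1})\alpha_k$ to equal the constant $\alpha_0$.

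With this identity in hand I would expand the squared norm,
\[
\|x_{k+1}+p_{k+1}-x^\star\|^2 = \|x_k+p_k-x^\star\|^2 - 2\alpha_0\langle x_k+p_k-x^\star,\nabla f(x_k)\rangle + \alpha_0^2\|\nabla f(x_k)\|^2,
\]
and split the inner product as $\langle x_k-x^\star,\nabla f(x_k)\rangle + k\langle x_k-x_{k-1},\nabla f(x_k)\rangle$. The two pieces are lower-bounded by the consequences of $f\in\mathcal{F}^{1,1}_L$ recorded in~\eqref{eq:proof_nes.ineq}: the second inequality gives $\langle x_k-x^\star,\nabla f(x_k)\rangle\ge f(x_k)-f^\star+\tfrac{1}{2L}\|\nabla f(x_k)\|^2$, while the third gives $k\langle x_k-x_{k-1},\nabla f(x_k)\rangle\ge k\bigl(f(x_k)-f(x_{k-1})\bigr)$. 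Substituting these, the gradient-norm contributions combine into $\alpha_0\bigl(\alpha_0-\tfrac{1}{L}\bigr)\|\nabla f(x_k)\|^2$, which is nonpositive precisely because $\alpha_0\le 1/L$, and can therefore be discarded.

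Writing $F_k:=f(x_k)-f^\star$, the surviving function-value terms are $-2\alpha_0 F_k-2\alpha_0 k(F_k-F_{k-1})=-2\alpha_0(k+1)F_k+2\alpha_0 k F_{k-1}$, so the per-step estimate rearranges into the telescoping form
\[
\|x_{k+1}+p_{k+1}-x^\star\|^2 + 2\alpha_0(k+1)F_k \;\le\; \|x_k+p_k-x^\star\|^2 + 2\alpha_0 k\,F_{k-1}.
\]
Defining the potential $\Phi_k:=\|x_k+p_k-x^\star\|^2+2\alpha_0 k\,F_{k-1}$, this reads exactly $\Phi_{k+1}\le\Phi_k$, whence $\Phi_{T+1}\le\Phi_0=\|x_0-x^\star\|^2$, using that both $p_0=0$ and the weight $2\alpha_0 k$ vanish at $k=0$. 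Dropping the nonnegative norm in $\Phi_{T+1}$ then gives $2\alpha_0(T+1)F_T\le\|x_0-x^\star\|^2$, which is precisely~\eqref{eq:hb_FL_bound-2}. I expect the only delicate points to be the bookkeeping at $k=0$ (where both the momentum term and the weight vanish, so that no $x_{-1}$ convention is needed) and the identification of the coefficient $c_k=k$; once the telescoping identity is established, the remainder is routine.
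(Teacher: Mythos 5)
Your proposal is correct and follows essentially the same route as the paper: the same momentum variable $p_k=k(x_k-x_{k-1})$, the same telescoping identity $x_{k+1}+p_{k+1}=x_k+p_k-\alpha_0\nabla f(x_k)$, and the same use of the inequalities in~\eqref{eq:proof_nes.ineq} to obtain the decreasing potential that is then summed over $k=0,\dots,T$. The paper states this proof very tersely; your write-up simply supplies the intermediate details (including the correct coefficient $c_k=k$ and the $k=0$ bookkeeping), all of which check out.
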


%
%

\begin{proof}
The proof  is similar to that of Theorem~\ref{Theorem 1}, so we will be somewhat terse. For $k\in\mathbb{N}_0$, let $p_k=k(x_k-x_{k-1})$. It is easy to verify that
\begin{align*}
x_{k+1}+p_{k+1}&=x_k+p_k-\alpha_0 \nabla f(x_k),
\end{align*}
which together with the inequalities in~\eqref{Proof 1:0} implies that
\begin{align*}
2\alpha_0(k+1)\bigl(f(x_k)-f^{\star}\bigr)+\Vert x_{k+1}+p_{k+1}-x^\star\Vert^2&\leq 2\alpha_0k\bigl(f(x_{k-1})-f^{\star}\bigr)+ \Vert x_k+p_k-x^\star\Vert^2.
\end{align*}
Summing this inequality over $k = 0,\ldots,T$ gives
\begin{align*}
2\alpha_0(T+1)\bigl(f(x_T)-f^{\star}\bigr)+\Vert x_{T+1}+p_{T+1}-x^\star\Vert^2&\leq \Vert x_0-x^\star\Vert^2.
\end{align*}
The proof is complete.
\end{proof}

To illustrate our results, we evaluate the gradient method and the two variations of the
Heavy-ball method on a numerical example. In this example, the objective function is the Moreau proximal envelope of the function $f(x)=(1/c)\Vert x\Vert$:

\begin{equation}\label{eq:Moreau_envelop}
  f(x) = \left\{
  \begin{aligned}
    &\dfrac{1}{c}\Vert x\Vert - \dfrac{1}{2c^2} & \Vert x\Vert\geq \dfrac{1}{c},\\
    & \dfrac{1}{2}\Vert x\Vert^2 &  \Vert x\Vert\leq \dfrac{1}{c},
  \end{aligned}
  \right.
\end{equation}
with $c=5$ and $x\in\mathbb{R}^{50}$. One can  verify that $f(x) \in$ \FL, i.e., it is convex and continuously differentiable with Lipschitz constant $L = 1$~\cite{Rockafellar:98}. First-order methods designed to find the minimum of this cost function are expected to pertain very poor convergence behavior~\cite{DrT:14}. For the Heavy-ball algorithm with constant step-sizes~\eqref{eq:hb_iters_constant} we chose $\beta=0.5$ and $\alpha=1/L$, for the variant with time varying step-sizes~\eqref{eq:Hb:FL:sequence} we used $\alpha_0 = 1/L$ whereas the gradient algorithm was implemented with the step-size $\alpha=1/L$. Fig.~\ref{fig:cvx_example} shows the progress of the objective values
towards the optimal solution. The plot suggests that $\mathcal{O}(1/k)$ is a quite accurate convergence rate estimate for the Heavy-ball and the gradient method.
\begin{figure}[thpb]
      \centering
      \includegraphics[scale=0.5]{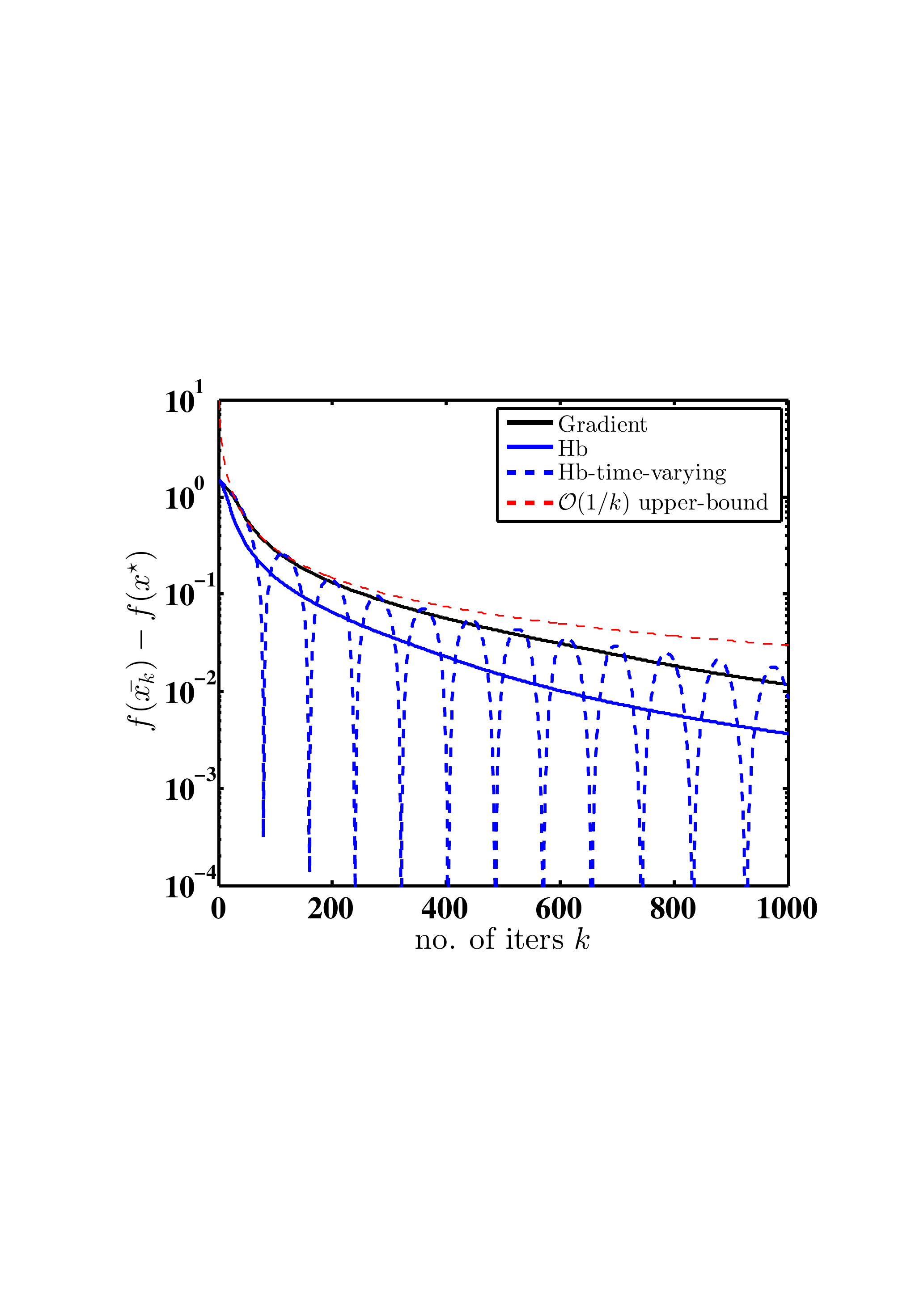}
      \caption{Comparison of the progress of the objective values evaluated at the Ces{\'a}ro average of the iterates of the gradient descent and Heavy-ball methods, and of the primal variable itself for Heavy-ball iterates with time-varying step-sizes. Included for reference is also an ${\mathcal O}(1/k)$ upper bound.}
      \label{fig:cvx_example}
   \end{figure}

%
%
\subsection{Convergence analysis of Nesterov's method with constant step-sizes}

For objective functions $f\in $ \Smu, it is possible to use constant step-sizes in Nesterov's method and still guarantee a linear rate of convergence~\cite{Nes:04}. For the objective functions on the class \FL, however, to the best of our knowledge no convergence result exists for Nesterov's method with fixed step-sizes. Using a similar analysis as in the previous section, we can derive the following convergence rate bound.
%
%

\begin{theorem}\label{thm:FL:Nesterov}
Assume that $f\in$\FL and that $\beta\in [0,1)$. Then the sequence $\{x_k\}$ generated by Nesterov's iteration~\eqref{eq:Nesterov_iters} satisfies
\begin{align}\label{eq:Nesterov_FL_bound}
&f(\overline{x}_T)-f^{\star}\leq \dfrac{1}{T+1}\biggl(\dfrac{\beta}{1-\beta}(f(x_{0})-f^\star)+\dfrac{L(1-\beta)}{2}\Vert x_{0}-x^\star\Vert^2\biggr).
\end{align}
\end{theorem}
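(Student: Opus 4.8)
The plan is to mirror the proof of Theorem~\ref{Theorem 1} by introducing a momentum-corrected sequence that turns Nesterov's two-step recursion into a plain gradient step. Adopting the standard initialization $y_0=x_0$, I would set $v_k=y_k+\frac{\beta}{1-\beta}(y_k-y_{k-1})$ for $k\ge 1$ and $v_0=x_0$, and first verify the key identity $v_{k+1}=v_k-\frac{1}{L(1-\beta)}\nabla f(x_k)$. Substituting $y_{k+1}=x_k-\frac{1}{L}\nabla f(x_k)$ and $x_k=(1+\beta)y_k-\beta y_{k-1}$ into the definition, the $y$-terms cancel and leave exactly this relation; it is the Nesterov analogue of the identity $x_{k+1}+p_{k+1}=x_k+p_k-\frac{\alpha}{1-\beta}\nabla f(x_k)$ from Theorem~\ref{Theorem 1}, now with the effective step $\alpha=1/L$ (the case $k=0$ being covered by the convention $y_0=v_0=x_0$). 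I would also record the companion identity $v_k-x^\star=(x_k-x^\star)+\frac{\beta}{1-\beta}(x_k-y_k)$, which plays the role that $p_k=\frac{\beta}{1-\beta}(x_k-x_{k-1})$ played before.

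Next I would expand $\Vert v_{k+1}-x^\star\Vert^2$ exactly as in~\eqref{Proof 1:0}, obtaining a $-\frac{2}{L(1-\beta)}\langle x_k-x^\star,\nabla f(x_k)\rangle$ term, a $-\frac{2\beta}{L(1-\beta)^2}\langle x_k-y_k,\nabla f(x_k)\rangle$ term, and the quadratic $\frac{1}{L^2(1-\beta)^2}\Vert\nabla f(x_k)\Vert^2$. The first inner product is bounded below by the second inequality in~\eqref{eq:proof_nes.ineq}, $\langle x_k-x^\star,\nabla f(x_k)\rangle\ge f(x_k)-f^\star+\frac{1}{2L}\Vert\nabla f(x_k)\Vert^2$. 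The genuinely new ingredient is the second inner product: by convexity $\langle x_k-y_k,\nabla f(x_k)\rangle\ge f(x_k)-f(y_k)$, and since $y_k$ is itself a full gradient step from $x_{k-1}$, the descent lemma~\cite[Lemma 1.2.3]{Nes:04} gives $f(y_k)\le f(x_{k-1})-\frac{1}{2L}\Vert\nabla f(x_{k-1})\Vert^2$, so $\langle x_k-y_k,\nabla f(x_k)\rangle\ge f(x_k)-f(x_{k-1})+\frac{1}{2L}\Vert\nabla f(x_{k-1})\Vert^2$.

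Collecting terms, the quadratic contributions reduce to $\frac{\beta}{L^2(1-\beta)^2}\bigl(\Vert\nabla f(x_k)\Vert^2-\Vert\nabla f(x_{k-1})\Vert^2\bigr)$, a perfect telescoping difference, and the remainder assembles into the one-step bound $\frac{2}{L(1-\beta)}(f(x_k)-f^\star)+A_{k+1}\le A_k$, where $A_k=\frac{2\beta}{L(1-\beta)^2}(f(x_{k-1})-f^\star)+\Vert v_k-x^\star\Vert^2-\frac{\beta}{L^2(1-\beta)^2}\Vert\nabla f(x_{k-1})\Vert^2$. Crucially the coefficient $\frac{2}{L(1-\beta)}$ is positive for every $\beta\in[0,1)$, so no restriction on $\beta$ is incurred. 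Summing over $k=0,\dots,T$ with the convention $x_{-1}=x_0$ telescopes the right-hand side to $A_0$, which after discarding the nonpositive gradient term is at most $\frac{2\beta}{L(1-\beta)^2}(f(x_0)-f^\star)+\Vert x_0-x^\star\Vert^2$.

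The step I expect to be most delicate is justifying that the terminal quantity $A_{T+1}$ may be dropped, since it carries the negative term $-\frac{\beta}{L^2(1-\beta)^2}\Vert\nabla f(x_T)\Vert^2$. Here I would invoke $f(x_T)-f^\star\ge\frac{1}{2L}\Vert\nabla f(x_T)\Vert^2$ (again the descent lemma, evaluated at $x_T$), which shows that the $f(x_T)$- and gradient-terms inside $A_{T+1}$ combine to a nonnegative quantity, so $A_{T+1}\ge 0$. This yields $\frac{2}{L(1-\beta)}\sum_{k=0}^T(f(x_k)-f^\star)\le\frac{2\beta}{L(1-\beta)^2}(f(x_0)-f^\star)+\Vert x_0-x^\star\Vert^2$; multiplying by $\frac{L(1-\beta)}{2}$, applying the averaging inequality~\eqref{eq:FL:mean:func:ineq}, and dividing by $T+1$ produces~\eqref{eq:Nesterov_FL_bound}. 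The main obstacle throughout is the bookkeeping of the gradient-norm terms: using only $f(y_k)\le f(x_{k-1})$ would leave an uncancelled positive multiple of $\Vert\nabla f(x_k)\Vert^2$ that forces $\beta<1/2$, so it is essential to keep the $-\frac{1}{2L}\Vert\nabla f(x_{k-1})\Vert^2$ slack from the descent lemma to make those terms telescope across the range $\beta\in[0,1)$.
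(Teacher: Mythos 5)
Your proposal is correct and follows essentially the same route as the paper: your $v_k$ coincides with the paper's $x_k+p_k$ (since $p_k=\tfrac{\beta}{1-\beta}(x_k-y_k)$), the key identity $v_{k+1}=v_k-\tfrac{1}{L(1-\beta)}\nabla f(x_k)$ is the same, and the telescoped sum yields the identical bound $\tfrac{1}{1-\beta}\sum_{k=0}^T\bigl(f(x_k)-f^\star\bigr)\le\tfrac{\beta}{(1-\beta)^2}\bigl(f(x_0)-f^\star\bigr)+\tfrac{L}{2}\Vert x_0-x^\star\Vert^2$. The only (harmless) divergence is in the momentum cross term: you bound $\langle x_k-y_k,\nabla f(x_k)\rangle$ by convexity plus the descent lemma at the gradient step defining $y_k$, which leaves a telescoping $\tfrac{\beta}{L^2(1-\beta)^2}\bigl(\Vert\nabla f(x_k)\Vert^2-\Vert\nabla f(x_{k-1})\Vert^2\bigr)$ correction that you correctly discard at $k=T$ via $f(x_T)-f^\star\ge\tfrac{1}{2L}\Vert\nabla f(x_T)\Vert^2$, whereas the paper splits $p_k$ into its $x_k-x_{k-1}$ and $\tfrac{1}{L}\nabla f(x_{k-1})$ parts and uses the strengthened smoothness inequality so that the $\Vert\nabla f(x_k)\Vert^2$ coefficient cancels exactly; both variants close for all $\beta\in[0,1)$ and give the same final estimate.
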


%
%

\begin{proof}
Assume that $\beta\in[0,1)$, and  let
\begin{align}
p_k=\dfrac{\beta}{1-\beta}\biggl(x_k-x_{k-1}+\dfrac{1}{L}\nabla f(x_{k-1})\biggr),\quad k\in\mathbb{N}_0.
\label{def:p_n}
\end{align}
Considering~\eqref{eq:Nesterov_iters} and substituting the $y$-th iterates in {the} $x$-th iterates yields
\begin{align*}
x_{k+1}+p_{k+1}&=\dfrac{1}{1-\beta}x_{k+1}+\dfrac{\beta}{1-\beta}(\dfrac{1}{L}\nabla f(x_k)-x_k)\overset{\eqref{eq:Nesterov_iters}}{=}x_k+p_k-\dfrac{1}{L(1-\beta)} \nabla f(x_k),
\end{align*}
which implies that
\begin{equation}
\begin{aligned}
\Vert x_{k+1}+p_{k+1}-x^\star\Vert^2=&  \Vert x_k+p_k-x^\star\Vert^2 -\dfrac{2}{L(1-\beta)}\langle x_k+p_k-x^\star,\nabla f(x_k)\rangle +\dfrac{1}{L^2(1-\beta)^2} \Vert \nabla f(x_k)\Vert^2\\
\overset{\eqref{def:p_n}}{=}& \Vert x_k+p_k-x^\star\Vert^2 -\dfrac{2}{L(1-\beta)}\langle x_k-x^\star,\nabla f(x_k)\rangle -\dfrac{2\beta}{L(1-\beta)^2}\langle x_k-x_{k-1},\nabla f(x_k)\rangle\\
& - \dfrac{2\beta}{L^2 (1-\beta)^2}\langle \nabla f(x_{k-1}),\, \nabla f(x_k)\rangle+\dfrac{1}{L^2(1-\beta)^2} \Vert \nabla f(x_k)\Vert^2\\
&\overset{\eqref{eq:proof_nes.ineq}}{\leq } \Vert x_k+p_k-x^\star\Vert^2 -\dfrac{2}{L(1-\beta)} (f(x_k)-f^\star)-\dfrac{1}{L^2(1-\beta)}\Vert\nabla f(x_k)\Vert^2\\
&-\dfrac{2\beta}{L(1-\beta)^2}(f(x_k)-f(x_{k-1})) -\dfrac{\beta}{L^2(1-\beta)^2}\Vert \nabla f(x_k)-\nabla f(x_{k-1})\Vert^2\\
& - \dfrac{2\beta}{L^2 (1-\beta)^2}\langle \nabla f(x_{k-1}),\, \nabla f(x_k)\rangle+\dfrac{1}{L^2(1-\beta)^2} \Vert \nabla f(x_k)\Vert^2.
\end{aligned}\nonumber
\end{equation}
After rearrangement of terms, we thus have
\begin{equation}
\begin{aligned}
\dfrac{2}{L(1-\beta)^2} (f(x_k)-f^\star)+\Vert x_{k+1}+p_{k+1}-x^\star\Vert^2 &\leq \dfrac{2\beta}{L(1-\beta)^2}(f(x_{k-1})-f^\star)+ \Vert x_k+p_k-x^\star\Vert^2 \\
& -\dfrac{\beta}{L^2(1-\beta)^2}\Vert \nabla f(x_{k-1})\Vert^2
\end{aligned}\label{eq:FL:Nesterov:proof:ineq}
\end{equation}
Multiplying the sides of~\eqref{eq:FL:Nesterov:proof:ineq} in $L/2$ and summing over $k = 0,\ldots,T$ gives
\begin{align*}
\dfrac{1}{1-\beta}\sum_{k=0}^T \bigl(f(x_k)-f^{\star}\bigr)&+\sum_{k=0}^T\left(\dfrac{\beta}{(1-\beta)^2}\bigl(f(x_k)-f^{\star}\bigr)+\dfrac{L}{2}\Vert x_{k+1}+p_{k+1}-x^\star\Vert^2\right)\\
&\hspace{1cm}\leq \sum_{k=0}^T\left(\dfrac{\beta}{(1-\beta)^2}\bigl(f(x_{k-1})-f^{\star}\bigr)+\dfrac{L}{2}\Vert x_{k}+p_{k}-x^\star\Vert^2\right),\
\end{align*}
which implies that
\begin{align*}
\dfrac{1}{1-\beta}\sum_{k=0}^T \bigl(f(x_k)-f^{\star}\bigr)\leq& \dfrac{\beta}{(1-\beta)^2}\bigl(f(x_0)-f^{\star}\bigr)+\dfrac{L}{2}\Vert x_{0}-x^\star\Vert^2.
\end{align*}
Using the convexity inequality~\eqref{eq:FL:mean:func:ineq} concludes the proof.
\end{proof}

Recently, Allen-Zou and Orrechia \cite{ZhO:14} demonstrated that another fast gradient method
due to Nesterov~\cite{Nesterov:13} converges with constant step-sizes for all $f\in$ \FL. That method generates
iterates in the following manner
\begin{equation}\label{eq:Nesterov:mirror}
\begin{aligned}
  y_{k+1} & = x_k - \dfrac{1}{L}\nabla f(x_k),\\
  z_{k+1}& = \underset{z\in\mathbb{R}^n}{\mbox{arg min}} \{V_x(z)+\alpha\langle \nabla f(x_k),\, z-z_k \rangle\},\\
  x_{k+1}& = \tau z_{k+1} + (1-\tau)y_{k+1},
\end{aligned}
\end{equation}
where $\tau\in[0,1]$, and $V_x(\cdot)$ is the Bergman divergence function~\cite{ZhO:14}.Similar to Theorem~\ref{thm:FL:Nesterov}, it has been shown in \cite{ZhO:14} that the Ces{\'a}ro average of the iterates generated by~\eqref{eq:Nesterov:mirror} converges to the optimum at a rate of $\mathcal{O}(1/k)$. Note that while both iterations~\eqref{eq:Nesterov_iters} and~\eqref{eq:Nesterov:mirror} enjoy the same global rate of convergence, the two schemes are remarkably different computationally. In particular,~\eqref{eq:Nesterov:mirror} requires two gradient computations per iteration, as opposed to one gradient computation needed in~\eqref{eq:Nesterov_iters}.
%
%

\section{Global analysis of Heavy-ball algorithm for the class \Smu}
\label{sec:Smu}

In this section, we focus on objective functions in the class \Smu  and derive a  global linear rate of convergence for the Heavy-ball algorithm.  In our convergence analysis, we will use the following simple lemma on convergence of sequences.
%
%
\begin{lem}\label{lem:hybrid_linear_convergece}
 Let $\{A_k\}_{k\geq0}$ and $\{B_k\}_{k\geq0}$ be nonnegative sequences of real numbers satisfying
 \begin{equation}\label{eq:lem:hybrid_ineq}
   A_{k+1}+ b B_{k+1}\leq a_1 A_k + a_2 A_{k-1}+ c B_k,\quad k\in \mathbb{N}_0
 \end{equation}
 with constants $a_1,a_2, b\in \mathbb{R}_+$ and $c \in \mathbb{R} $. Moreover, assume that
 \begin{equation}\nonumber
   A_{-1}=A_0,\quad a_1+a_2<1, \quad c<b.
 \end{equation}
 Then, the sequence $\{A_k\}_{k\geq0}$ generated by~\eqref{eq:lem:hybrid_ineq} satisfies
\begin{equation}\label{eq:lem:linear_rate}
  A_{k}\leq q^k ((q-a_1+1)A_0+ cB_0), 
\end{equation}
where $q\in [0,1)$ is given by
\begin{equation*}
\quad q=\max\left\{ \dfrac{c}{b},\, \dfrac{a_1+\sqrt{a_1^2+4a_2}}{2} \right\}.
\end{equation*}

\begin{proof}
 It is easy to check that \eqref{eq:lem:linear_rate} holds for $k=0$. Let $\gamma\geq 0$. From~\eqref{eq:lem:hybrid_ineq}, we have
\begin{equation}
\begin{aligned}
    A_{t+1} + \gamma A_t + b B_{t+1}&\leq  (a_1+\gamma) A_t + a_2 A_{t-1} + c B_t \\
    & = (a_1+\gamma) (A_t+ \dfrac{a_2}{a_1+\gamma}A_{t-1}+\dfrac{c}{a_1+\gamma}B_t)\\
    &\leq (a_1+\gamma) (A_t+ \gamma A_{t-1}+ b B_t).
\end{aligned}
\label{Eq:lemma 1-1}
  \end{equation}
Note that the last inequality holds if 
\begin{equation}\label{eq:lem:hybrid_proof_ineq1}
\begin{aligned}
    \dfrac{a_2}{a_1+\gamma}\leq \gamma, \quad \dfrac{c}{a_1+\gamma}\leq b.
\end{aligned}
  \end{equation}
  The first term in \eqref{eq:lem:hybrid_proof_ineq1} along with $\gamma\geq 0$ is equivalent to have $ (-a_1+\sqrt{a_1^2+4a_2})/2\leq \gamma$. Moreover, the second condition in~\eqref{eq:lem:hybrid_proof_ineq1} can be rewritten as $ c/b - a_1\leq \gamma$. Thus,  if 
  \begin{equation}
 \gamma= \max\left\{\dfrac{-a_1+\sqrt{a_1^2+4a_2}}{2}, \dfrac{c}{b} - a_1,0\right\},
  \end{equation}
then \eqref{eq:lem:hybrid_proof_ineq1} holds. Denoting  $q\triangleq a_1+{\gamma}<1$ , it follows from~\eqref{Eq:lemma 1-1} that
\begin{align*}
A_{t+1} + \gamma A_t + b B_{t+1}&\leq  q (A_t + \gamma A_{t-1} + c B_t) \leq \cdots\leq q^{t+1} ((1+\gamma)A_{0}+ c B_{0}).
\end{align*}
Since $A_t $ and $B_{t+1}$ are nonnegative,~\eqref{eq:lem:linear_rate} holds. The proof is complete.

\end{proof}
\end{lem}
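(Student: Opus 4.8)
The plan is to read \eqref{eq:lem:hybrid_ineq} as a coupled second-order recursive inequality and to reduce it to a one-step geometric contraction by means of an \emph{energy function} of the form $V_t := A_t + \gamma A_{t-1} + b B_t$, where $\gamma \ge 0$ is a free weight to be fixed later. Since $A_t,A_{t-1},B_t$ are all nonnegative we automatically have $A_t \le V_t$, so it is enough to exhibit a contraction $V_{t+1}\le q\,V_t$ with some $q\in[0,1)$: iterating then gives $A_t \le V_t \le q^t V_0$, and evaluating $V_0$ yields a bound of the form \eqref{eq:lem:linear_rate}.

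To produce the contraction I would add $\gamma A_t$ to both sides of \eqref{eq:lem:hybrid_ineq}. The left-hand side becomes exactly $V_{t+1}=A_{t+1}+\gamma A_t + bB_{t+1}$, while the right-hand side becomes $(a_1+\gamma)A_t + a_2 A_{t-1} + cB_t$. Factoring out $a_1+\gamma$ and calling $q:=a_1+\gamma$, the goal is to dominate the bracketed expression by $A_t+\gamma A_{t-1}+bB_t = V_t$. Because $A_{t-1}\ge0$ and $B_t\ge0$, it suffices to compare the coefficients term by term, which reduces everything to the two scalar requirements $a_2/(a_1+\gamma)\le\gamma$ and $c/(a_1+\gamma)\le b$, exactly the inequalities that reappear in the eventual choice of $\gamma$.

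The next step is to solve these for the smallest admissible $\gamma$. The first requirement is the quadratic $\gamma^2+a_1\gamma-a_2\ge0$, equivalently $\gamma\ge(-a_1+\sqrt{a_1^2+4a_2})/2$, and the second is $\gamma\ge c/b-a_1$; taking $\gamma=\max\{(-a_1+\sqrt{a_1^2+4a_2})/2,\;c/b-a_1,\;0\}$ satisfies both, and then $q=a_1+\gamma$ collapses neatly to $\max\{c/b,\,(a_1+\sqrt{a_1^2+4a_2})/2\}$, the value in the statement (the spurious candidate $a_1$ is absorbed since $\sqrt{a_1^2+4a_2}\ge a_1$). Telescoping $V_{t+1}\le qV_t$ down to $V_0$ and substituting $A_{-1}=A_0$ gives $V_0=(1+\gamma)A_0+bB_0$; since $\gamma=q-a_1$, the factor $(1+\gamma)=(q-a_1+1)$ matches \eqref{eq:lem:linear_rate}, with the $B_0$-term carrying the weight $b$ that the energy assigns to $B_t$.

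The step I expect to be decisive is verifying $q<1$, because this is precisely where the two structural hypotheses are spent. The bound $c/b<1$ is immediate from $c<b$ and $b>0$. For the other branch I would rewrite $(a_1+\sqrt{a_1^2+4a_2})/2<1$ as $\sqrt{a_1^2+4a_2}<2-a_1$; this is a legitimate squaring since $2-a_1>0$ (guaranteed by $a_1\le a_1+a_2<1$), after which it simplifies to $a_2<1-a_1$, i.e. exactly the hypothesis $a_1+a_2<1$. The remaining pieces — the base case $k=0$ of \eqref{eq:lem:linear_rate} and the final passage $A_k\le V_k$ — are routine bookkeeping.
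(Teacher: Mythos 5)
Your proposal is correct and follows essentially the same route as the paper: the Lyapunov quantity $V_t=A_t+\gamma A_{t-1}+bB_t$, the same two scalar conditions $a_2/(a_1+\gamma)\le\gamma$ and $c/(a_1+\gamma)\le b$, the same choice of $\gamma$, and the same verification that $q<1$ from $a_1+a_2<1$ and $c<b$. The one point worth flagging is the coefficient of $B_0$ in the final bound: your telescoping honestly yields $q^k\bigl((q-a_1+1)A_0+bB_0\bigr)$, whereas the lemma states $cB_0$. The paper's own proof writes the telescoped chain with $cB_t$ in place of $bB_t$, but the step it would need, $cB_t\le q\,cB_t$ with $q<1$, does not hold for $c>0$; the constant its argument actually supports is the $bB_0$ you obtained (and since $c<b$, the stated bound is the stronger, unproven one). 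This discrepancy is immaterial for how the lemma is used (only the linear rate $q$ matters in Theorem~\ref{thm:Smu}), so your proof is sound and, if anything, more careful than the original on this point.
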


We are now ready for the main result in this section.

%
%

\begin{theorem}
\label{thm:Smu}
Assume that $f\in$~\Smu and that
%
\begin{align}\label{eq:Hb:Smu:stability_criteria}
 \alpha\in(0,\dfrac{2}{L}),\quad 0\leq  \beta<\dfrac{1}{2}\biggl( \dfrac{\mu \alpha}{2}+\sqrt{\dfrac{\mu^2\alpha^2}{4}+4(1-\dfrac{\alpha L}{2})} \biggr) .
\end{align}
Then, the Heavy-ball method~\eqref{eq:hb_iters_constant} converges linearly to a unique optimizer $x^\star$. In particular,
\begin{align}\label{eq:Hb:Smu:rate}
f(x_{k})-f^\star \leq q^k (f(x_0)-f^\star),
\end{align}
where $q\in[0,1)$.
\end{theorem}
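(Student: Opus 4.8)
The plan is to mirror the Lyapunov/auxiliary-variable technique of Theorem~\ref{Theorem 1}, but now exploit the extra strong-convexity inequality to produce a \emph{contraction} rather than a mere telescoping bound, and then feed the resulting recursion into Lemma~\ref{lem:hybrid_linear_convergece}. First I would reuse the change of variables $p_k=\frac{\beta}{1-\beta}(x_k-x_{k-1})$ and the identity
\begin{align*}
x_{k+1}+p_{k+1}=x_k+p_k-\dfrac{\alpha}{1-\beta}\nabla f(x_k),
\end{align*}
so that expanding $\Vert x_{k+1}+p_{k+1}-x^\star\Vert^2$ gives exactly the expression in~\eqref{Proof 1:0}. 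The key difference is that for $f\in$~\Smu I now have access to the strong-convexity lower bound $\frac{\mu}{2}\Vert x_k-x^\star\Vert^2\leq f(x_k)-f^\star-\langle\nabla f(x_k),x^\star-x_k\rangle$, equivalently $\langle x_k-x^\star,\nabla f(x_k)\rangle\geq \mu\Vert x_k-x^\star\Vert^2$ together with the descent inequalities in~\eqref{eq:proof_nes.ineq}. Substituting these, I expect to land on a recursion of the shape
\begin{align*}
A_{k+1}+bB_{k+1}\leq a_1 A_k+a_2 A_{k-1}+cB_k,
\end{align*}
where the natural identifications are $A_k=\Vert x_k+p_k-x^\star\Vert^2$ (or a closely related squared-distance quantity) and $B_k=f(x_k)-f^\star$, with the constants $a_1,a_2,b,c$ determined by $\alpha,\beta,L,\mu$.

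The main obstacle will be the bookkeeping that turns the strong-convexity term $-\frac{2\alpha\mu}{1-\beta}\Vert x_k-x^\star\Vert^2$ into a genuine contraction factor on $A_k=\Vert x_k+p_k-x^\star\Vert^2$, since $A_k$ mixes $x_k-x^\star$ with the momentum term $x_k-x_{k-1}$. I would handle this by using the definition of $p_k$ to expand $\Vert x_k+p_k-x^\star\Vert^2$ and absorb the cross terms, or alternatively by choosing to measure progress in a composite Lyapunov function and splitting $\Vert x_k-x^\star\Vert^2$ off via Young's inequality so that a fraction of it can be charged against $A_k$ while the remainder is dominated by the negative gradient-norm term. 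This is precisely where the stability region~\eqref{eq:Hb:Smu:stability_criteria} enters: the upper bound on $\beta$ is exactly the condition guaranteeing that the coefficients can be arranged so that $a_1+a_2<1$ (equivalently that the larger root $\frac{a_1+\sqrt{a_1^2+4a_2}}{2}$ stays below $1$) and $c<b$, the two hypotheses of Lemma~\ref{lem:hybrid_linear_convergece}. Verifying the quadratic inequality in $\beta$ that results from $a_1+a_2<1$ and matching it to the stated threshold will be the most delicate algebra.

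With the recursion in the form required by Lemma~\ref{lem:hybrid_linear_convergece}, together with the initialization $A_{-1}=A_0$ (which holds because $x_{-1}=x_0$ forces $p_0=0$, so $A_0=\Vert x_0-x^\star\Vert^2$), the lemma immediately yields $A_k\leq q^k\bigl((q-a_1+1)A_0+cB_0\bigr)$ with $q=\max\{c/b,\,(a_1+\sqrt{a_1^2+4a_2})/2\}\in[0,1)$. To convert this into the advertised bound~\eqref{eq:Hb:Smu:rate} on $f(x_k)-f^\star$, I would either track $B_k=f(x_k)-f^\star$ directly through the same recursion (the lemma bounds the $A_k$ sequence, but a parallel argument applied with the roles adjusted bounds $B_k$), or appeal to the sandwiching estimates $\frac{\mu}{2}\Vert x_k-x^\star\Vert^2\leq f(x_k)-f^\star\leq\frac{L}{2}\Vert x_k-x^\star\Vert^2$ from Definition~\ref{def:FL} to transfer linear convergence of the distance to linear convergence of the function gap, at the cost of an extra $L/\mu$ constant folded into $f(x_0)-f^\star$. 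Uniqueness of the optimizer $x^\star$ follows for free from strong convexity, so the only real work is the coefficient verification against~\eqref{eq:Hb:Smu:stability_criteria}.
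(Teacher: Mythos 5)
Your plan diverges substantially from the paper's proof, and the step you yourself flag as ``the main obstacle'' is not a bookkeeping nuisance but the actual crux, and it is left unresolved. The difficulty is structural: strong convexity gives you a negative multiple of $\Vert x_k-x^\star\Vert^2$, but your Lyapunov quantity is $A_k=\Vert x_k+p_k-x^\star\Vert^2$, and $\Vert x_k-x^\star\Vert^2$ does not dominate any fixed fraction of $\Vert x_k+p_k-x^\star\Vert^2$ (take $x_k=x^\star$ with $x_k\neq x_{k-1}$, so $A_k>0$ while the strong-convexity term vanishes). To obtain $a_1+a_2<1$ for this $A_k$ you would additionally have to extract a negative multiple of $\Vert x_k-x_{k-1}\Vert^2$ from the cross term and run a Young-type splitting, and there is no reason the admissible $(\alpha,\beta)$ region that falls out would match the specific threshold in~\eqref{eq:Hb:Smu:stability_criteria}. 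There is also a shape mismatch with Lemma~\ref{lem:hybrid_linear_convergece}: with your identification the two-step memory sits in the function values ($f(x_k)$ and $f(x_{k-1})$ both appear via~\eqref{Proof 1:0}), i.e.\ in your $B$-sequence, whereas the lemma only tolerates two-step memory in the $A$-sequence. Finally, even granting the recursion, the lemma bounds $A_k=\Vert x_k+p_k-x^\star\Vert^2$, and the sandwich $f(x_k)-f^\star\leq \tfrac{L}{2}\Vert x_k-x^\star\Vert^2$ needs $\Vert x_k-x^\star\Vert$, not $\Vert x_k+p_k-x^\star\Vert$, so one more (fixable but missing) step is required to reach~\eqref{eq:Hb:Smu:rate}.

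The paper avoids all of this by never measuring distance to the optimum. It takes $A_k=f(x_k)-f^\star$ and $B_k=\Vert x_k-x_{k-1}\Vert^2$, and builds the recursion~\eqref{eq:hb:Smu:proof:main_eq2} from three ingredients: the exact expansion of $\Vert x_{k+1}-x_k\Vert^2$ under the Heavy-ball update, the descent lemma applied to that update (which bounds $f(x_{k+1})-f^\star$ by $f(x_k)-f^\star$ plus gradient, momentum and cross terms), and the strong-convexity inequalities $\Vert\nabla f(x_k)\Vert^2\geq 2\mu(f(x_k)-f^\star)$ and $\langle\nabla f(x_k),x_k-x_{k-1}\rangle\geq f(x_k)-f(x_{k-1})+\tfrac{\mu}{2}\Vert x_k-x_{k-1}\Vert^2$. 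The two-step memory then lands naturally on the $f$-values, Lemma~\ref{lem:hybrid_linear_convergece} applies directly to the function gap (with $B_0=0$ since $x_{-1}=x_0$), and the threshold in~\eqref{eq:Hb:Smu:stability_criteria} emerges from requiring $c<b$ and optimizing the free parameter $\theta$ (setting $1-\theta=\alpha L/2$). If you want to salvage your distance-based route you would essentially be proving a different theorem with different constants; to prove the stated result you should switch to the paper's function-value Lyapunov pair.
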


%
%

\begin{proof}
For the heavy-ball iterates~\eqref{eq:hb_iters_constant}, we have
\begin{equation}\label{eq:hb:Smu:proof:error_identity}
\begin{aligned}
  \Vert x_{k+1}-x_k\Vert^2& = \alpha^2\Vert \nabla f(x_k)\Vert^2 + \beta^2\Vert x_k- x_{k-1}\Vert^2 - 2\alpha \beta \langle \nabla f(x_k),\, x_k-x_{k-1}\rangle.
\end{aligned}
\end{equation}
Moreover, since $f$ belongs to \FL, it follows from~\cite[Theorem 2.1.5]{Nes:04} and~\eqref{eq:hb_iters_constant} that
\begin{equation}\label{eq:hb:Smu:proof:f_upperbound}
  \begin{aligned}
    f(x_{k+1})-f^\star &\leq f(x_k) - f^\star - \alpha(1-\dfrac{\alpha L}{2})\Vert \nabla f(x_k)\Vert^2 + \dfrac{L\beta^2}{2}\Vert x_k-x_{k-1}\Vert^2\\
    & + \beta (1-\alpha L) \langle\nabla f(x_k), \, x_k-x_{k-1}\rangle.
  \end{aligned}
\end{equation}
Let $\theta\in (0,1)$, multiply both sides of~\eqref{eq:hb:Smu:proof:error_identity} by $L\theta/(2-2\theta)$, and add the resulting identity to~\eqref{eq:hb:Smu:proof:f_upperbound} to obtain
\begin{equation}\label{eq:hb:Smu:proof:main_eq1}
\begin{aligned}
  f(x_{k+1})-f^\star + &\dfrac{L\theta}{2(1-\theta)} \Vert x_{k+1}-x_k\Vert^2  \leq f(x_k)-f^\star \\
  &\qquad + \alpha \biggl( \dfrac{L}{2(1-\theta)} \alpha - 1\biggr) \Vert \nabla f(x_k)\Vert^2 + \dfrac{L\beta^2}{2(1-\theta)} \Vert x_k-x_{k-1}\Vert^2\\
  &\qquad + \beta (1- \dfrac{\alpha L}{1-\theta})\langle \nabla f(x_k), x_k-x_{k-1}\rangle.
\end{aligned}
\end{equation}
Assume that $(1-\theta)/L\leq \alpha< 2(1-\theta)/L$. Then, since $f\in$ \Smu, it follows from~\cite[Theorem 2.1.10]{Nes:04} that
\begin{equation}\nonumber
\begin{aligned}
  f(x_{k+1})-f^\star + &\dfrac{L\theta}{2(1-\theta)} \Vert x_{k+1}-x_k\Vert^2  \leq f(x_k)-f^\star \\
  &\qquad+  2\alpha\mu \biggl( \dfrac{L}{2(1-\theta)} \alpha - 1\biggr) (f(x_k)-f^\star) +\dfrac{L\beta^2}{2(1-\theta)} \Vert x_k-x_{k-1}\Vert^2\\
  & \qquad+ \beta(1- \dfrac{\alpha L}{1-\theta}) (f(x_k)-f(x_{k-1}))+\dfrac{\beta\mu}{2}(1- \dfrac{\alpha L}{1-\theta})  \Vert x_k - x_{k-1}\Vert^2.
\end{aligned}
\end{equation}
Collecting terms yields
\begin{equation}\label{eq:hb:Smu:proof:main_eq2}
\begin{aligned}
  f(x_{k+1})-f^\star + &\underset{b}{\underbrace{\dfrac{L\theta}{2(1-\theta)}}} \Vert x_{k+1}-x_k\Vert^2  \leq \underset{a_1}{\underbrace{\biggl(1-2\alpha \mu (1-\dfrac{\alpha L}{2(1-\theta)})-\beta( \dfrac{\alpha L}{1-\theta}-1)\biggr)}} (f(x_k)-f^\star)\\
  &\qquad + \underset{a_2}{\underbrace{\beta( \dfrac{\alpha L}{1-\theta}-1)}} (f(x_{k-1})-f^\star)+ \underset{c}{\underbrace{\dfrac{\beta}{2}\biggl(\mu(1- \dfrac{\alpha L}{1-\theta})+ \dfrac{L\beta}{1-\theta}\biggr)}}  \Vert x_k - x_{k-1}\Vert^2,
\end{aligned}
\end{equation}
which is on the form of Lemma~\ref{lem:hybrid_linear_convergece} if we identify $A_k$ with $f(x_k)-f^{\star}$ and $B_k$ with $\Vert x_k-x^{\star}\Vert^2$. It is easy to verify that for $\theta\in(0,1)$ and $(1-\theta)/L\leq \alpha< 2(1-\theta)/L$, one has
\begin{equation}\nonumber
 b>0,\quad  a_1+a_2 < 1.
\end{equation}
Moreover, provided that
\begin{equation}\nonumber
   0\leq  \beta<\dfrac{1}{2}\biggl( \dfrac{\mu}{L}(\alpha L+\theta-1)+\sqrt{\dfrac{\mu^2}{L^2}(\alpha L+\theta-1)^2+4\theta} \biggr),
\end{equation}
it holds that $c<b$ and consequently one can apply Lemma~\ref{lem:hybrid_linear_convergece} with constants $a_1,a_2,b$, and $c$ to conclude the linear convergence~\eqref{eq:Hb:Smu:rate}. Defining $\lambda \triangleq 1-\theta $ the stability criteria reads
\begin{align*}
\lambda\in(0,1),\quad \dfrac{\lambda}{L}\leq \alpha< \dfrac{2\lambda}{L},\quad 0\leq  \beta<\dfrac{1}{2}\biggl( \dfrac{\mu}{L}(\alpha L -\lambda)+\sqrt{\dfrac{\mu^2}{L^2}(\alpha L-\lambda)^2+4(1-\lambda)} \biggr).
\end{align*}
The first two conditions can be rewritten as
\begin{equation}\nonumber
  \alpha \in (0,\dfrac{2}{L}),\quad \lambda\in\biggl(\dfrac{\alpha L}{2}, \mbox{min}(\alpha L,1)\biggr).
\end{equation}
Substituting $\lambda=\alpha L/2$ into the upper stability bound on $\beta$ completes the proof.
\end{proof}

This result extends earlier theoretical results for $\mathcal{S}^{2,1}_{L,\mu}$ to  \Smu and demonstrates that
the Heavy-ball method has the same rate of convergence as the gradient method and Nesterov's
fast gradient method for this class of objective functions. A few comments regarding our stability criteria~\eqref{eq:Hb:Smu:stability_criteria} are in order.

First, we observe that~\eqref{eq:Hb:Smu:stability_criteria} guarantees stability for a wider range of parameters than the stability criteria~\eqref{eq:Hb:FL:stabaility_criteria} for $f\in$ \FL, and wider ranges of parameters than the stability analysis of the Heavy-ball method for  non-convex cost functions presented in~\cite{ZaK:93}. In particular, when $\alpha$ tends to $2/L$, our stability criterion allows $\beta$ to be as large as $\mu/L$, whereas the stability condition~\eqref{eq:Hb:FL:stabaility_criteria} requires that $\beta$ tends to zero when $\alpha$ reaches $2/L$; see Fig.~\ref{fig:smu_stability}.

\begin{figure}[thpb]
      \centering
      \includegraphics[scale=0.4]{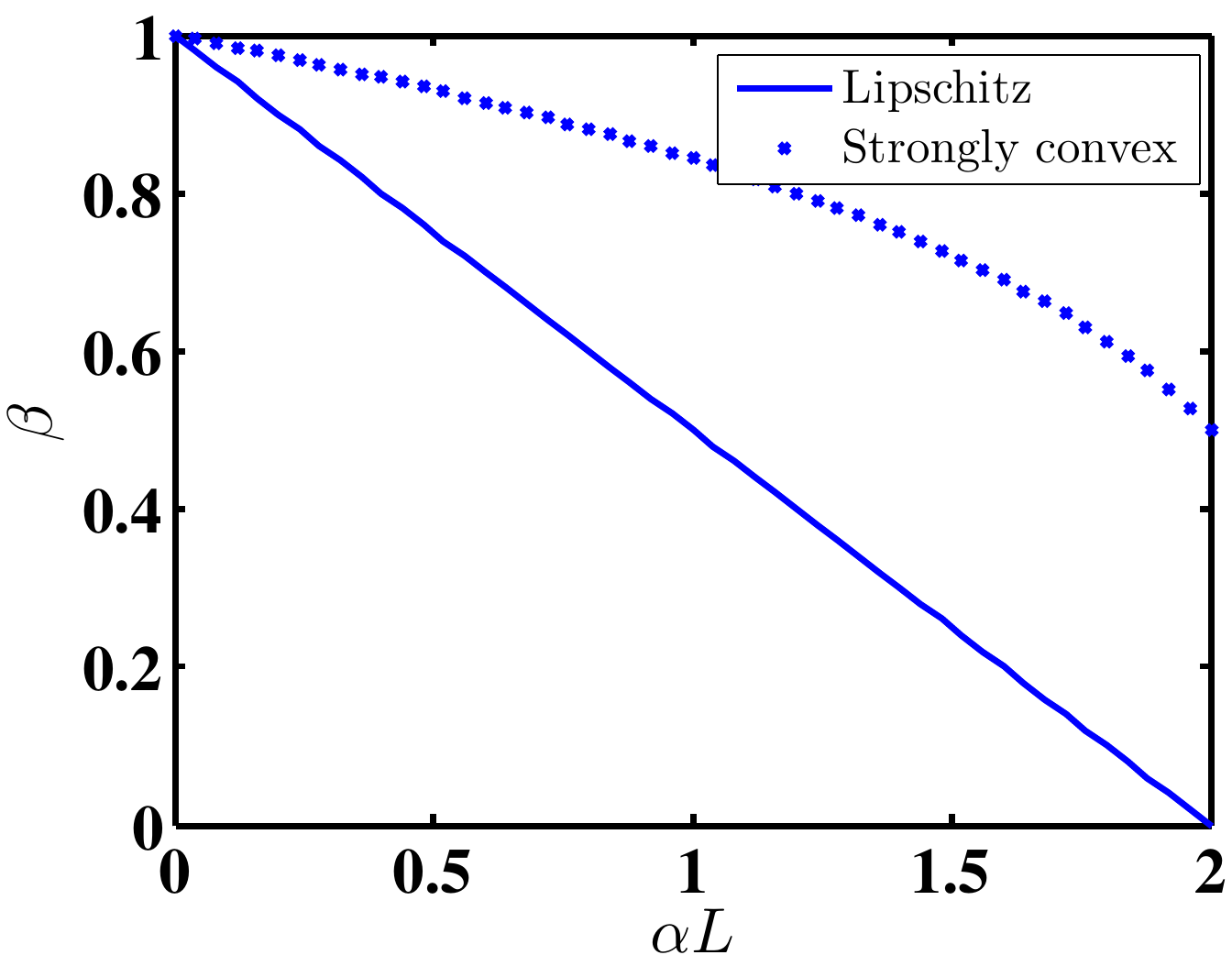}
      \includegraphics[scale=0.4]{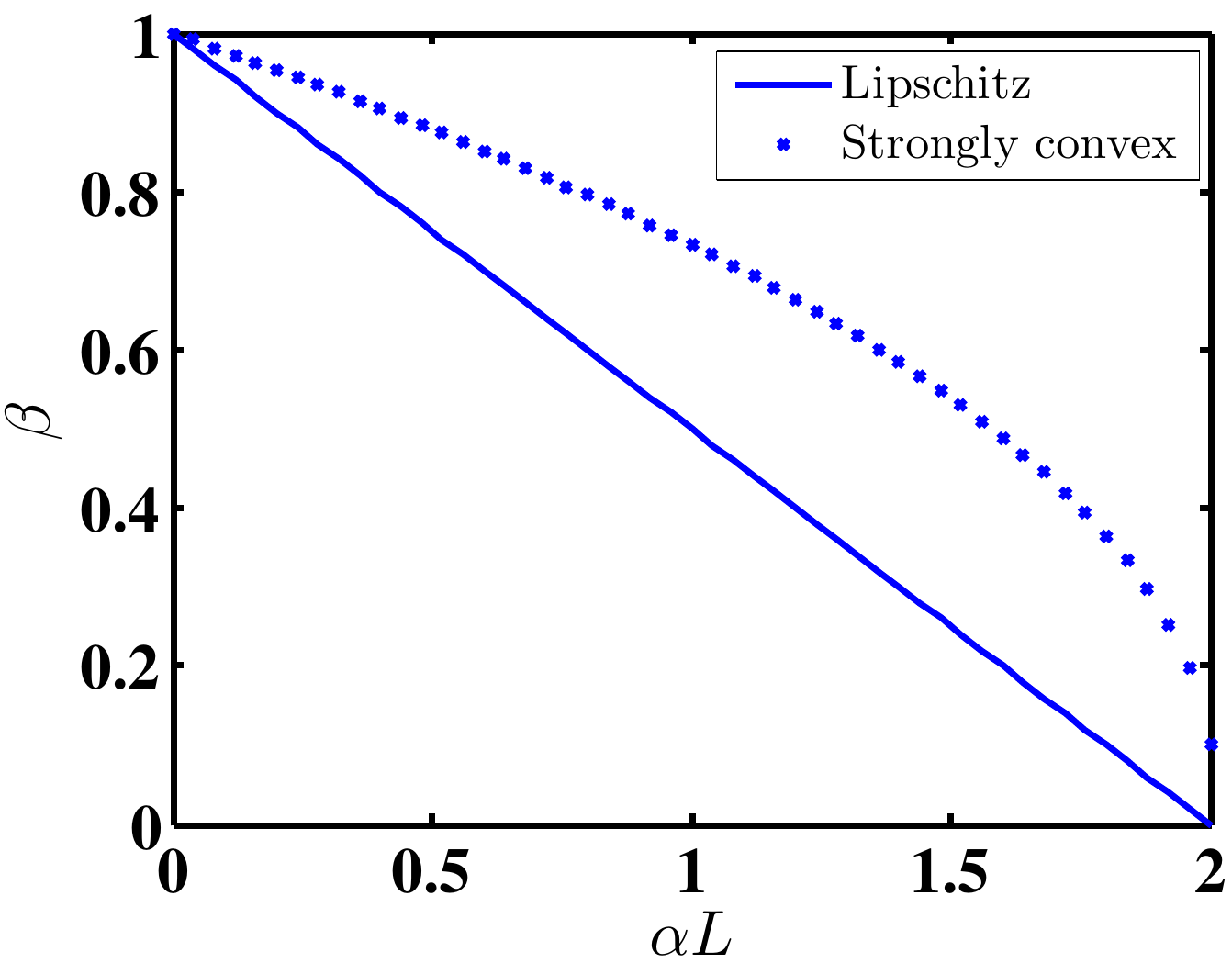}
      \caption{The set of parameters $(\alpha, \beta)$ which guarantee convergence of the Heavy-ball algorithm~\eqref{eq:hb_iters_constant} for  objective functions $f\in$ \FL  (Theorem.~\ref{Theorem 1}) and $f\in$ \Smu  (Theorem.~\ref{thm:Smu}). The left figure uses $L=2,\, \mu=1$ and in the right figure $L=10,\, \mu =1$.} 
      \label{fig:smu_stability}
   \end{figure}

\begin{figure}[thpb]
      \centering
      \includegraphics[scale=0.44]{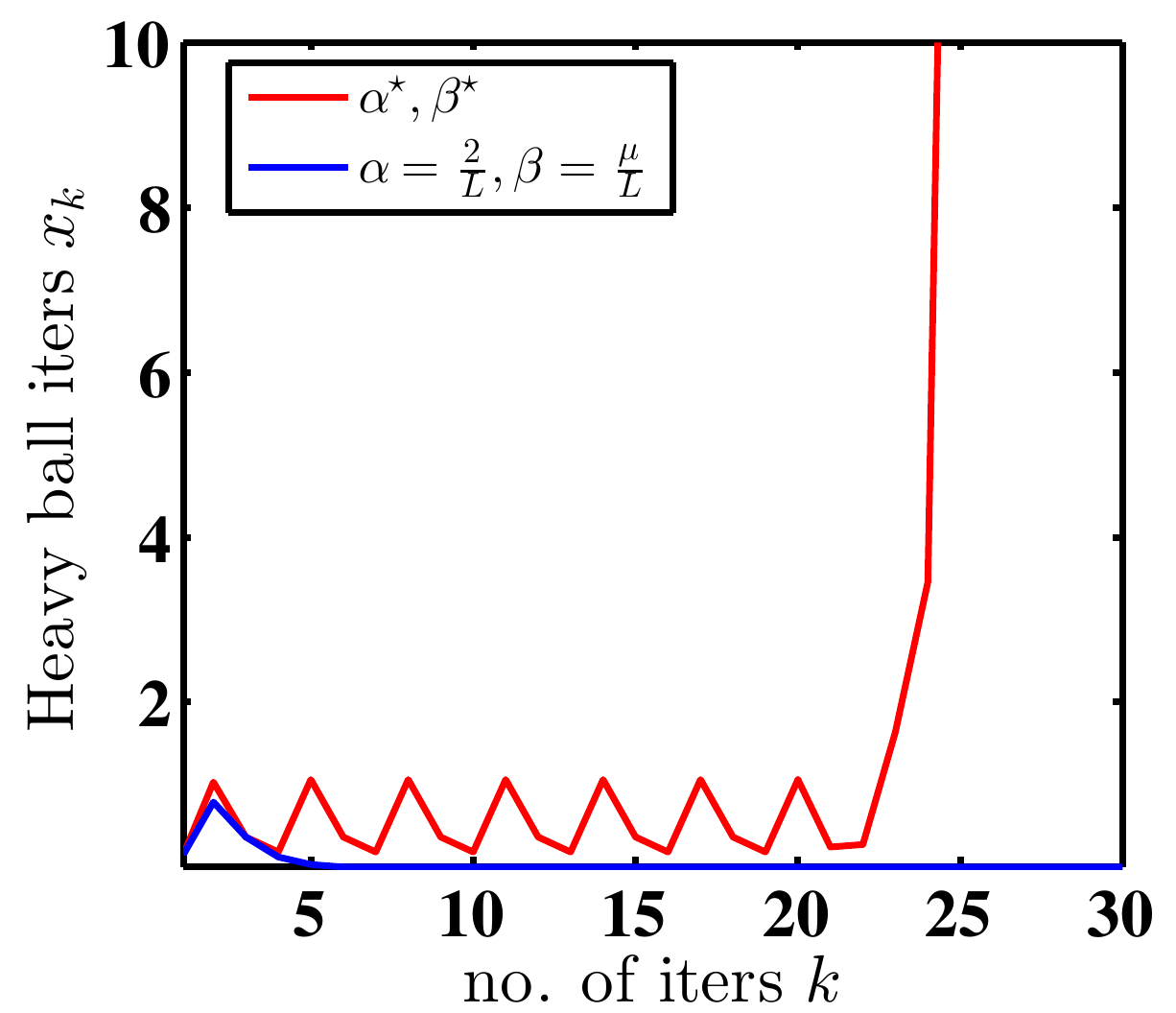}
      \includegraphics[scale=0.44]{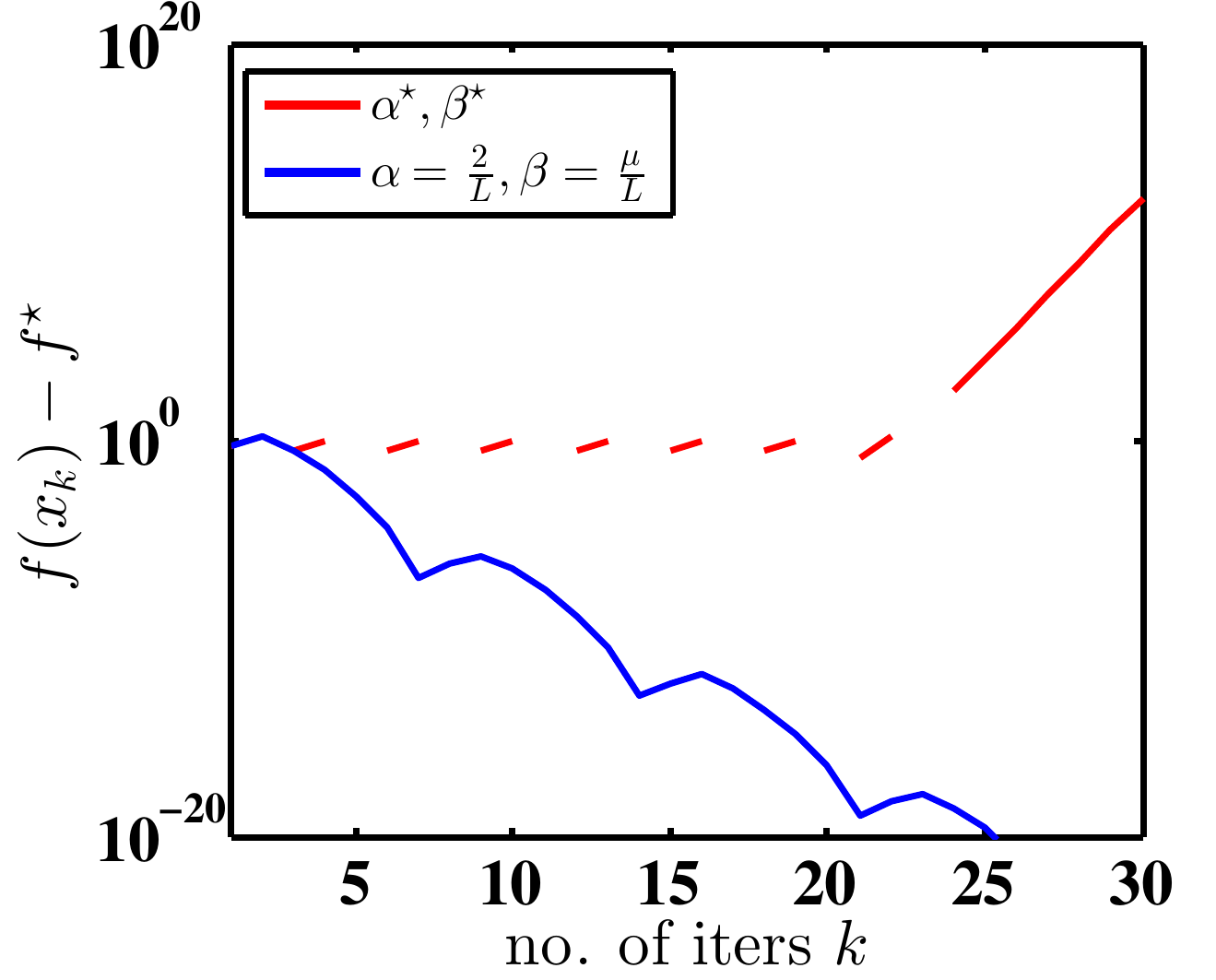}
      \caption{Heavy-ball iterates with optimal step-sizes for $f\in {\mathcal S}_{\mu, L}^{2,1}$ do not converge for the example in~\eqref{eq:lessard_example}. However, parameters that satisfy our new global stability criteria ensure convergence of the iterates.}
      \label{fig:lessard_example}
   \end{figure}
Second, by comparing~\eqref{eq:Hb:Smu:stability_criteria} with $\alpha$ and $\beta$ that guarantee stability for twice differentiable strongly convex functions~\cite{Pol:87}:
\begin{align}
\beta\in[0,1),\quad \alpha\in\biggl(0,\dfrac{2(1+\beta)}{L}\biggr),
\label{HB:pol}
\end{align}
our stability criteria may appear restrictive at first. However, motivated by~\cite{LRP:14}, we consider a counter example where the original stability criteria~\eqref{HB:pol} for twice differentiable strongly convex functions do not hold for the class~\Smu. In particular, let us consider
\begin{equation}\label{eq:lessard_example}
  \nabla f(x) = \left\{
  \begin{array}{ll}
   50 x+45 & \hspace{8.5mm}x<-1,\\
    5x & -1\leq x<0,\\
    50x & \hspace{2mm}0\leq x.
  \end{array}
   \right.
\end{equation}
It is easy to check that $\nabla f$ is continuous and $f\in$ \Smu with $\mu= 5$ and $L = 50$. According to our numerical tests, for initial conditions in the interval $x_0<-0.8$ or $x_0>0.15$, the Heavy-ball method with parameters $\alpha^\star = 4/(\sqrt{L}+\sqrt{\mu})^2$ and $\beta^\star=(\sqrt{L}-\sqrt{\mu})^2/(\sqrt{L}+\sqrt{\mu})^2$ (the optimal step-sizes for the class $\mathcal{S}^{2,1}_{L,\mu}$ in~\cite{Pol:87}) produces non-converging sequences. However, Fig.~\ref{fig:lessard_example} shows that using the maximum value of
$\alpha$ permitted by our global analysis results in iterates that converge to the optimum.

Finally, note that Lemma~\ref{lem:hybrid_linear_convergece} also provides an estimate of the convergence factor of the iterates. In particular, after a few simplifications one can find that when
\begin{equation*}
\alpha\in(0,\frac{1}{L}], \quad \beta=\sqrt{(1-\alpha\mu)(1-\alpha L)},
\end{equation*}
 and $\theta=1-\alpha L$ in~\eqref{eq:hb:Smu:proof:main_eq2}, the convergence factor of the Heavy-ball method~\eqref{eq:hb_iters_constant} is given by $q=1-\alpha \mu$. Note that this factor coincides with the best known convergence factor for the gradient method on \Smu~\cite[Theorem 2, Chapter 1]{Pol:87}. However, supported by the numerical simulations we envisage that the convergence factor could be strengthened even further. This is indeed left as a future work.
%
%

%
%

\section{CONCLUSIONS}
\label{sec:conclusion}

Global stability of the Heavy-ball method has been established for two important classes of convex optimization problems. Specifically, we have shown that when the objective function is convex and has a Lipschitz-continuous gradient, then the Ces{\'a}ro-averages of the iterates converge to the optimum at a rate no slower than $\mathcal{O}(1/k)$, where $k$ is the number of iterations. When the objective function is also strongly convex, we established that the Heavy-ball iterates converge linearly to the unique optimum. 

In our future work, we hope to extend the present results to the constrained optimization problems and derive sharper bounds on the guaranteed convergence factor when $f\in$ \Smu.




\bibliographystyle{IEEEtran}
\bibliography{bibfile}

\end{document}